\theoremstyle{plain}
\newtheorem{prop}{Proposition}[section]
\newtheorem{coro}[prop]{Corollary}
\newtheorem{lemm}[prop]{Lemma}
\newtheorem{thm}[prop]{Theorem}
\theoremstyle{definition}
\newtheorem{defn}[prop]{Definition}
\newtheorem{rem}[prop]{Remark}
\def\mcg#1;#2{\Gamma_{#1,#2}}
\def\fg#1;#2{\Pi_{#1,#2}}
\def\tb#1;#2{\mathscr{K}_{\frac{#1}{#2}}}
\begin{document}

\title[The Jones polynomial of rational links]
{the Jones polynomial of rational links}

\keywords{rational links, Jones polynomial}

\author{Khaled Qazaqzeh}
\address{Department of Mathematics\\ Faculty of Science \\ Kuwait University\\
P. O. Box 5969\\ Safat-13060, Kuwait, State of Kuwait}
\email{khaled@sci.kuniv.edu.kw}

\author{Moh'd Yasein}
\address{Department of Mathematics\\ The Hashemite University\\ Zarqa, Jordan}
\email{myasein@hu.edu.jo}
\urladdr{http://staff.hu.edu.jo/myasein}

\author{Majdoleen  Abu-Qamar}
\address{Department of Mathematics\\ Yarmouk University\\
Irbid, Jordan, 21163}
\email{mjabuqamar@yahoo.com}

\subjclass[2010]{57M27}

\date{21/05/2014}

\begin{abstract}
We give an explicit formula for the Jones polynomial of any rational link in terms
of the denominators of the canonical continued fraction of the slope of the given
rational link.
\end{abstract}

\maketitle


\section{Rational links and continued fraction}

The class of rational links have been the core of many studies since
they have been classified by Schubert in \cite{S} in terms of a
rational number called the slope. Many people since then have
studied different polynomial invariants of rational links and knots.
For example, the authors of \cite{F, M}  give an explicit formula
for the Conway (Alexander) polynomial invariant of rational links
independently. Moreover, the authors of
\cite{DS,K,LLS,LM,LZ,N1,N2,St} have studied the Jones polynomial of
rational links either directly or indirectly through studying
another polynomial invariant that reduces to the Jones polynomial
after some special normalization using different techniques.


In this paper, we give an explicit formula for the Jones polynomial
of any rational link using a different approach than the one used in
the above references. Our approach uses the Kauffman bracket state
model given in \cite{Ka} and its relation to the Tutte polynomial of
the Tait graph obtained from the diagram of the given link.

 A continued fraction of the rational number $\frac{p}{q}$ is a sequence of integers $b_{1},b_{2}, \ldots, b_{n}$ such that
\[
\frac{p}{q} = b_{1}+\cfrac{1}{b_{2}+\cfrac{1}{\ldots +\cfrac{1}{b_{n}}}}.
\]

This continued fraction of the rational number $\frac{p}{q}$ will be abbreviated by
$[b_{1}, b_{2}, \ldots, b_{n}]$. The integers $b_{i}$ are called the denominators
of the continued fraction of the rational number $\frac{p}{q}$.

Each rational link is characterized by a rational number called the slope $\frac{p}{q}$ of a pair of relatively prime integers $p,q$ with $|\frac{p}{q}| \geq 1$ and $q > 0$ by the following theorem due to Schubert \cite{S}.

\begin{thm}
Two rational links $L_{\frac{p}{q}}$ and $L_{\frac{p'}{q'}}$ are
equivalent if and only if
\begin{align*}
p&=p',
\\ \text{and }
q^{\pm1}&\equiv \pm q'(\bmod p).
\end{align*}
\end{thm}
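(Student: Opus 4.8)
The plan is to replace the classification problem for rational links by the classification problem for their $2$-fold branched covers, which are lens spaces. Recall that $L_{p/q}$ is the plat (numerator) closure of the rational tangle associated to a continued-fraction expansion of $p/q$, and that the $2$-fold cover of a $3$-ball branched over a rational tangle is a solid torus (the trivial $2$-arc tangle already has this property, and every rational tangle is carried to the trivial one by a self-homeomorphism of the ball). Cutting $(S^3,L_{p/q})$ along the $2$-sphere that separates the two tangle balls and passing to $2$-fold branched covers therefore realises $\Sigma_2(L_{p/q})$, the branched cover of $S^3$ along $L_{p/q}$, as a union of two solid tori glued along their common boundary torus; tracking the image of the meridian through the continued fraction identifies this closed $3$-manifold as the lens space $L(p,q)$. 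The degenerate case $|p|=1$ (the unknot) is checked by hand. With this in place the two implications are handled quite differently.

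For the \emph{if} direction I would produce explicit moves realising the stated congruence. Passing to the mirror image of a diagram of $L_{p/q}$ replaces the rational tangle of fraction $p/q$ by the one of fraction $-p/q=p/(-q)$, so after renormalising it realises $q\mapsto -q\pmod p$ by a link equivalence. Rotating the $4$-plat presentation of $L_{p/q}$ by a half-turn in its plane is an ambient isotopy that replaces $q$ by its multiplicative inverse modulo $p$, realising $q\mapsto q^{-1}\pmod p$. Because the normalisation $q>0$, $|p/q|\ge 1$ only retains $q$ modulo $p$, these two moves already generate every $q'$ with $q^{\pm1}\equiv\pm q'\pmod p$; since both moves preserve $p$, the arithmetic condition implies $L_{p/q}\simeq L_{p'/q'}$.

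For the \emph{only if} direction, an equivalence $L_{p/q}\simeq L_{p'/q'}$ is a homeomorphism of pairs $(S^3,L_{p/q})\to(S^3,L_{p'/q'})$; by uniqueness of the branched cover it lifts to a homeomorphism $\Sigma_2(L_{p/q})\to\Sigma_2(L_{p'/q'})$, that is $L(p,q)\cong L(p',q')$, orientation-reversing exactly when the equivalence involves a mirror image --- and $-L(p,q)\cong L(p,-q)$, which only changes the sign of $q'$. Comparing the orders of first homology gives $|p|=|p'|$, hence $p=p'$ by the normalisation. The Reidemeister--Franz classification of lens spaces, namely $L(p,q)\cong L(p,q')$ if and only if $q'\equiv\pm q^{\pm1}\pmod p$, now yields the congruence and finishes the proof.

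The main obstacle is that the \emph{only if} direction rests on two substantial imported results. The first is the identification of $\Sigma_2(L_{p/q})$ with $L(p,q)$: the idea is routine but making the slope come out correctly requires care with orientation conventions and with the bookkeeping of the continued fraction. The second, and genuinely deep, ingredient is the homeomorphism classification of lens spaces itself, whose honest proof runs through Reidemeister torsion (Reidemeister, Franz, de Rham) or, equivalently, through the fact that every homeomorphism of a lens space is isotopic to one preserving its genus-one Heegaard torus. If one prefers to avoid this machinery, the alternative is Schubert's original argument, which normalises a $2$-bridge diagram combinatorially and reads the unordered pair $\{q,q^{-1}\}$ modulo $p$ directly from the plat; that route is self-contained but markedly more intricate, and I would take it only if the lens-space classification were not available.
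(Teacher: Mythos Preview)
The paper does not prove this theorem at all: it is quoted verbatim as Schubert's classification result with a citation to \cite{S}, and the authors use it only as background for their normalisation of the continued fraction. There is therefore no ``paper's own proof'' to compare against.

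Your proposal is nonetheless a correct and standard route to Schubert's theorem. The identification $\Sigma_2(L_{p/q})\cong L(p,q)$ together with the Reidemeister--Franz classification of lens spaces handles the \emph{only if} direction cleanly, and the explicit flype/rotation moves you describe for the \emph{if} direction are the usual ones. Two small caveats are worth flagging. First, your ``rotating the $4$-plat by a half-turn'' realising $q\mapsto q^{-1}\pmod p$ deserves a line of justification: one should check via the continued fraction (reversing $[b_1,\dots,b_n]$ to $[b_n,\dots,b_1]$) that the new slope really is $p/q^*$ with $qq^*\equiv 1\pmod p$. Second, the mirror move sends $p/q$ to $-p/q$, which in the paper's normalisation $p/q>1$, $q>0$ corresponds to $p/(p-q)$ rather than literally to $q\mapsto -q$; this is harmless but should be said. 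Schubert's original 1956 argument, which you allude to at the end, proceeds instead by a direct combinatorial normalisation of the bridge presentation and avoids the lens-space machinery entirely; your approach trades that combinatorics for the (deeper) torsion classification, which is the exchange most modern expositions make.
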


A diagram of a rational link can be constructed from the denominators of any continued fraction of its slope by closing the 4-braid $\sigma_{1}^{b_{1}}\sigma_{2}^{-b_{2}}\sigma_{1}^{b_{3}}\ldots$ in the manner shown in figure \ref{figure1}, where $\sigma_{1}, \sigma_{2}$ are shown in figure \ref{figure2} and the multiplication is defined by concatenating from left to right. It is well known that for odd numerator $p$ this diagram represents a knot and for even numerator $p$ it represents a two component link. 

\begin{figure}[h]
 \centering
        \includegraphics[width=8cm,height=4cm]{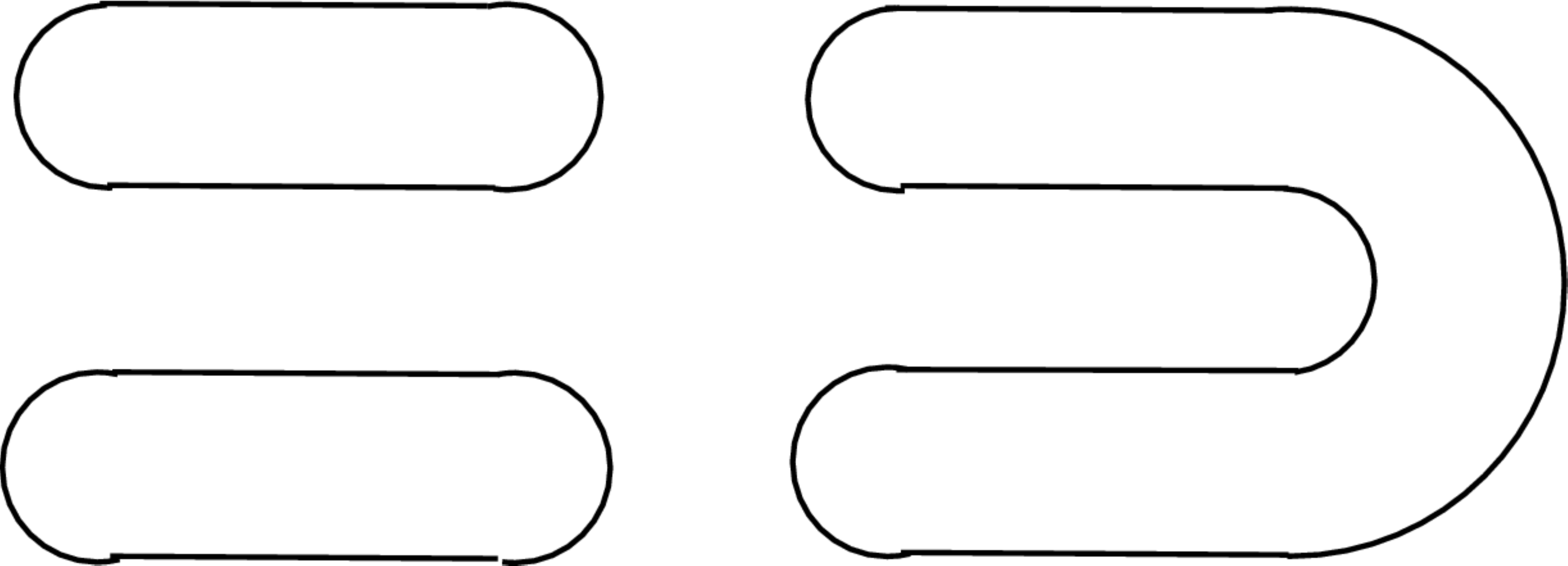}
    \caption{The odd and the even closure of the 4-braid respectively}
\label{figure1}
\end{figure}

 \begin{figure}[h]
 \centering
        \includegraphics[width=10cm,height=4cm]{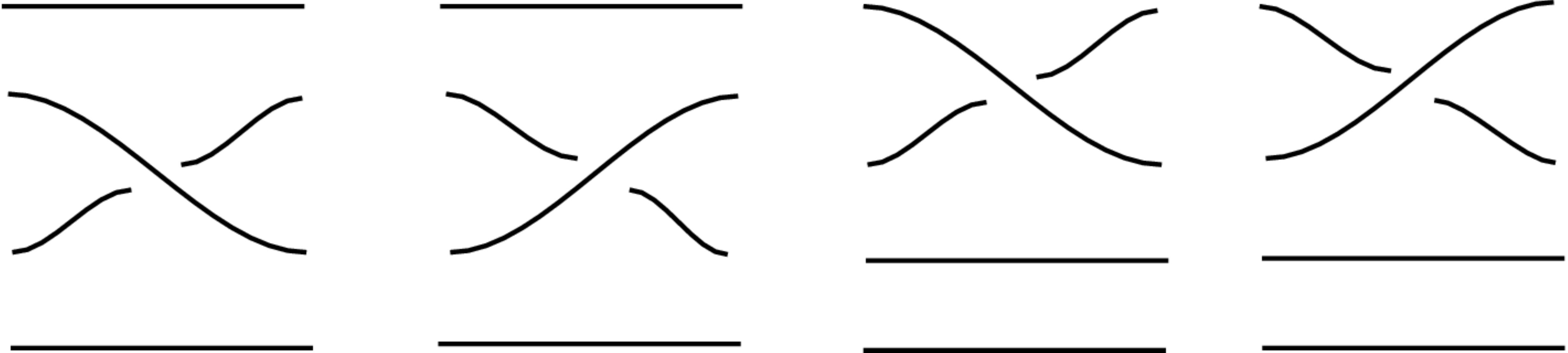}
    \caption{The 4-braids $\sigma_{1}, \sigma_{1}^{-1}, \sigma_{2},$ and $\sigma_{2}^{-1}$ respectively}
\label{figure2}
\end{figure}


It is sufficient to consider the case when the number of denominators of the continued fraction $n$ is odd and $b_{i}\geq 1$ for $ i = 1,2,\ldots n$ as a result of the following lemma.

\begin{lemm}
There exists a unique continued fraction of $\frac{p}{q} > 1$ of positive integers with $n$ odd and $b_{i}\geq 1$ for $ i = 1,2,\ldots n$.
\end{lemm}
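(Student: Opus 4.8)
The plan is to prove the following slightly stronger statement by strong induction on the denominator $q$: \emph{for every rational number $\frac{p}{q}>1$ written in lowest terms there is exactly one continued fraction $[b_{1},\dots,b_{n}]$ of $\frac{p}{q}$ with all $b_{i}\geq 1$ and $n$ odd, and also exactly one with all $b_{i}\geq 1$ and $n$ even.} (The value of a continued fraction depends only on $\frac{p}{q}$, so assuming lowest terms costs nothing.) The lemma is the odd half of this statement. The base case $q=1$ is immediate: $\frac{p}{q}=p\geq 2$; a length-one expansion $[b_{1}]$ forces $b_{1}=p$; a length-two expansion $[b_{1},b_{2}]=p$ forces $b_{2}=1$ and $b_{1}=p-1$; and no expansion of length $\geq 3$ can equal an integer, since then its tail $[b_{2},\dots,b_{n}]=b_{2}+\frac{1}{[b_{3},\dots,b_{n}]}$ is $>1$, forcing $b_{1}=\lfloor p\rfloor=p$ and hence $\frac{1}{[b_{2},\dots,b_{n}]}=0$, which is absurd. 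Thus $[p]$ is the unique odd one and $[p-1,1]$ the unique even one.

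For the inductive step assume $q\geq 2$, so that $\frac{p}{q}$ is not an integer. The first thing to establish is that in \emph{any} positive-entry continued fraction $[b_{1},\dots,b_{n}]$ of $\frac{p}{q}$ the initial entry is forced to be $b_{1}=\lfloor\frac{p}{q}\rfloor=:a$. Because $\frac{p}{q}$ is not an integer, $n\geq 2$; if $n\geq 3$ the tail exceeds $1$ as above so $b_{1}=a$, while if $n=2$ then $\frac{p}{q}=b_{1}+\frac{1}{b_{2}}$, where $b_{2}=1$ is impossible (it would make $\frac{p}{q}$ an integer), so $b_{2}\geq 2$, $\frac{1}{b_{2}}\in(0,1)$, and again $b_{1}=a$. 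Hence $[b_{2},\dots,b_{n}]=\bigl(\frac{p}{q}-a\bigr)^{-1}=\frac{q}{r}$ with $r:=p-aq$; here $1\leq r\leq q-1$ and $\gcd(q,r)=\gcd(q,p)=1$, so $\frac{q}{r}>1$ is in lowest terms with denominator $r<q$.

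This yields the recursion. Deleting the forced entry $b_{1}=a$ is a bijection from the set of positive-entry continued fractions of $\frac{p}{q}$ onto the set of positive-entry continued fractions of $\frac{q}{r}$, with inverse $[c_{1},\dots,c_{m}]\mapsto[a,c_{1},\dots,c_{m}]$, and it lowers the length by one. Since $n$ and $n-1$ have opposite parity, the odd-length expansions of $\frac{p}{q}$ correspond bijectively to the even-length expansions of $\frac{q}{r}$ and vice versa. By the induction hypothesis applied to $\frac{q}{r}$, whose denominator $r$ is smaller than $q$, there is exactly one expansion of each parity, and therefore exactly one odd and exactly one even positive-entry continued fraction of $\frac{p}{q}$. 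This completes the induction.

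The point that forces the strengthening — and the only genuine subtlety — is the parity flip built into the recursion $[b_{1},\dots,b_{n}]\leftrightarrow[b_{2},\dots,b_{n}]$: the odd case alone does not reduce to itself, so one must carry the even case along. The other step needing a little care is ``the initial entry is forced'', where the short expansions $n=1,2$ must be separated from $n\geq 3$; this is also precisely where the hypotheses $\frac{p}{q}>1$ (rather than $\geq 1$) and $\gcd(p,q)=1$ are used. For existence alone one could instead take the standard continued fraction produced by the Euclidean algorithm — automatically all-positive because $\frac{p}{q}>1$ — and, if its length has the wrong parity, adjust it via $[b_{1},\dots,b_{n-1},b_{n}]=[b_{1},\dots,b_{n-1},b_{n}-1,1]$ when $b_{n}\geq 2$ or $[b_{1},\dots,b_{n-1},1]=[b_{1},\dots,b_{n-2},b_{n-1}+1]$ when $b_{n}=1$; but the induction above delivers uniqueness at the same time.
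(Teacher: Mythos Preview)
Your proof is correct, and indeed more complete than the paper's own. The paper simply runs the Euclidean algorithm to produce the standard continued fraction $[b_{1},\dots,b_{n}]$ with $b_{n}\geq 2$, then applies the parity trick $[b_{1},\dots,b_{n}]\leadsto[b_{1},\dots,b_{n}-1,1]$ if $n$ happens to be even; for uniqueness it merely asserts that this ``follows from applying the Euclidean algorithm at every step,'' without addressing why the odd-length representative is unique among \emph{all} positive-entry expansions (not just the one with $b_{n}\geq 2$). Your approach is organized differently: you strengthen the claim to both parities and run strong induction on $q$, which forces you to isolate the key step --- that $b_{1}=\lfloor p/q\rfloor$ is determined --- and to handle the short cases $n=1,2$ explicitly. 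This buys you a clean uniqueness proof at no extra cost, whereas the paper's argument really only establishes existence carefully. You yourself note at the end that the paper's constructive route (Euclidean algorithm plus parity adjustment) is available for existence alone; that is precisely what the paper does.
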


\begin{proof}
We start with the rational number $\frac{p}{q} > 1$ such that $\gcd(p,q)=1$ and $ p> q > 0$. Thus we can apply the Euclidean algorithm to get
\begin{align*}
p&=qb_{1}+q_{1}, & 0 < q_{1} < q\\
q&=q_{1}b_{2}+q_{2},&0 < q_{2} <q_{1}\\
q_{1}&=q_{2}b_{3}+q_{3},&0 < q_{3} <q_{2}\\
& \ \ \vdots\\
q_{n-3}&=q_{n-2}b_{n-1}+q_{n-1},&0 < q_{n-1} <q_{n-2}\\
q_{n-2}&=q_{n-1}b_{n}.\\
\end{align*}

Now we have
\begin{equation*}
\frac{p}{q}= \frac{qb_{1}+q_{1}}{q} = b_{1} + \frac{1}{\frac{q}{{q_{1}}}}= b_{1} + \frac{1}{\frac{q_{1}b_{2}+q_{2}}{{q_{1}}}} = b_{1} + \frac{1}{b_{2} + \frac{q_{1}}{q_{2}}} = \dots =  b_{1}+\cfrac{1}{b_{2}+\cfrac{1}{b_{3}+\cfrac{1}{\ddots +\cfrac{1}{b_{n}}}}}.
\end{equation*}

In this way, we get a continued fraction $[b_1,b_{2},\ldots,b_n]$ of
$\frac{p}{q}$ with $b_{n} \geq 2$ since $q_{n - 1} < q_{n-2}$. Now
if $n$ is even then $[b_1,b_{2},\ldots,b_n -1, 1]$ is the continued
fraction with odd number of denominators. Finally, the uniqueness
follows from applying the Euclidean algorithm at every step.
\end{proof}

\begin{defn}
The unique continued fraction obtained using the above lemma will be called the canonical continued fraction of $\frac{p}{q}$ and the diagram obtained from the canonical continued fraction will be called the canonical diagram of the rational link whose slope is $\frac{p}{q}$. It is easy to see that the canonical diagram is alternating.
\end{defn}

\begin{rem}
The motivation of the above definition and lemma is the work of the authors in \cite[Section.\,2]{KL} for rational tangles.
\end{rem}

\begin{rem}
Most of the material of this section appears in \cite{QDQ} with the same title and we include it in here to make this paper more self-contained.
\end{rem}

\section{The Jones polynomial}

%



The Jones polynomial is an invariant of links that was first defined by V. Jones in \cite{J}. It is a Laurent polynomial in one indeterminant defined on the set of oriented links. There are many approaches to define this invariant, but we choose the approach that will serve our purposes in this paper.

The Jones polynomial of a given link can be computed using the Tutte polynomial of the associated Tait's graph of the given link diagram. In this paper, we restrict our work to alternating link diagrams. Therefore, the associated Tait's graph will be a planar graph without signs.
Now we recall the definition of the Tutte polynomial of graphs and for further details and more basic reference about this polynomial see \cite{B}. 

\begin{defn}\label{def of tutte}
The Tutte polynomial $\chi(G;x,y)\in \mathbb Z[x,y]$ of a graph $G$ is defined as follows:
\begin{enumerate}
    \item If the graph $G$ consists only of the vertex $v$, then $\chi
    (v)=1$.
    \item If the graph $G$ consists only of the edge $e$, then$\chi
    (e)=x$.
    \item If the graph $G$ consists only of the loop $l$, then$\chi
    (l)=y$.
    \item\label{4} If $G_1*G_2$ denotes a connected graph consists of two graphs
    $G_1$ and $G_2$ having just one vertex in common, then $\chi (G_1*G_2)=\chi(G_1) \chi(G_2)$.
    \item If $G_1\sqcup G_2$ is the disjoint union of the two graphs $G_{1}$ and $G_{2}$, then $\chi (G_1\sqcup G_2)=\chi(G_1) \chi(G_2)$.
    \item\label{6} If $e$ is an edge which is neither a loop nor a bridge of the graph $G$, then
    $\chi (G)=\chi (G-e)+\chi(G\backslash e)$ where $G-e$ is the graph obtained be deleting the edge $e$
    in $G$ and $G\backslash e$ is the graph obtained by contracting the edge $e$ in $G$.
\end{enumerate}
In a graph $G$ a bridge is an edge whose removal increases the
number of components of $G$ and a loop is an edge which has the same
vertex as its endpoints.

\end{defn}



The way to construct the Tait's graph of a given alternating link
diagram is by using the checkerboard coloring, that is we color the
regions of the diagram in $\mathbb{R}^{2}$ into two colors black and
white such that regions which share an arc have different colors. We
then place a vertex in each black region and associate an edge to
each crossing of the link that connects two vertices to obtain the
graph $G$. By interchanging black regions with white regions,
we obtain the dual graph of $G$. 

We quote the following lemma that first appeared in \cite{K1}.
\begin{lemm}
If the outside region is white, then the Tait's graph of the canonical rational link diagram takes the form of graph given in figure \ref{figure4}.
\end{lemm}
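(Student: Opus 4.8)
The plan is to reconstruct the Tait graph directly from the canonical $4$-braid word $\sigma_{1}^{b_{1}}\sigma_{2}^{-b_{2}}\sigma_{1}^{b_{3}}\cdots\sigma_{1}^{b_{n}}$ (with $n$ odd and all $b_{i}\geq 1$) by following the checkerboard coloring twist region by twist region. First I would pin down the coloring using the hypothesis that the unbounded region is white; since a connected alternating diagram admits exactly two checkerboard colorings, this choice is rigid and fixes the color of every region, in particular the colors of the long strand regions running along the braid and of the bigons inside each twist region. The local input is the standard dictionary for a twist region of a reduced alternating diagram: a block $\sigma_{1}^{\pm b_{i}}$ has $b_{i}-1$ bigons between its successive crossings, and in the black graph it contributes either a bundle of $b_{i}$ parallel edges between two vertices (when those bigons are black) or a path of $b_{i}$ edges with $b_{i}-1$ interior vertices (when the bigons are white); the same holds for $\sigma_{2}^{\pm b_{i}}$ after the $90^{\circ}$ rotation carrying $\sigma_{2}$ to $\sigma_{1}$, with the roles of ``bundle'' and ``path'' interchanged.

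With this in hand I would induct on $n$. For $n=1$ the diagram is the standard closure of $\sigma_{1}^{b_{1}}$, whose black graph with the outside white is a single block of $b_{1}$ edges --- the base case of figure \ref{figure4}. For the inductive step, the diagram associated with $[b_{1},\dots,b_{n}]$ is obtained from the one for $[b_{1},\dots,b_{n-1}]$ by appending the twist region $\sigma_{2}^{-b_{n}}$ or $\sigma_{1}^{b_{n}}$ at the end of the braid and then re-closing (note this also switches the odd and even closures of figure \ref{figure1}, so intermediate diagrams need not themselves be canonical). By induction the black graph of the shorter diagram is the chain of blocks of sizes $b_{1},\dots,b_{n-1}$; I would then track the black regions created by the $b_{n}$ new crossings and their $b_{n}-1$ new bigons. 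By the local dictionary these form one new block of $b_{n}$ edges, and the alternation between $\sigma_{1}$-type and $\sigma_{2}$-type twist regions --- combined with the fact that the two colors alternate along the braid in the complementary way --- forces this block to be a path-block exactly when the last block of the shorter chain is a bundle-block, and conversely; moreover it meets the previous block along a single black region, i.e. a single vertex. This produces the chain of blocks of sizes $b_{1},\dots,b_{n}$ with alternating type drawn in figure \ref{figure4}, the hypothesis that the outside region is white being precisely what fixes the type of the first block and hence of all of them.

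The step I expect to require the most care is not conceptual but the bookkeeping at the delicate places: the two inequivalent closures of the $4$-braid (the odd closure a knot, the even one a two-component link, as in figure \ref{figure1}), which govern how the two extreme vertices of the chain are joined up once the braid is closed, and the junctions between consecutive $\sigma_{1}$- and $\sigma_{2}$-blocks, where one must verify that the region shared by the two twist regions is black and is the \emph{only} region they share. To catch parity or orientation slips I would check the construction explicitly on $\frac{p}{q}=[2]$ (the Hopf link, whose black graph is a double edge), $[1,1,1]$ (a trefoil), and a genuinely mixed example such as $[2,1,3]$, before asserting the general pattern. I note finally that the hypothesis cannot be dropped: colouring the outside region black interchanges the two checkerboard colorings and yields the planar dual of the graph in figure \ref{figure4}.
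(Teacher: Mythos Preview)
The paper does not prove this lemma at all: it is stated as a quotation from Murasugi's book \cite{K1} and no argument is given. So there is nothing in the paper to compare your proposal against beyond noting that you are supplying a proof where the authors simply cite one.

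On its own merits your plan is sound and is the standard way this fact is established: fix the checkerboard coloring by the hypothesis that the unbounded region is white, use the local dictionary that a twist block contributes either a multi-edge or a path according to the color of its bigons, and then read off the chain of alternating blocks. One minor simplification you might consider: rather than inducting on $n$ and passing through intermediate diagrams with an even number of denominators (which forces you to juggle both closures of figure~\ref{figure1}), induct on $k$ with $n=2k+1$, appending a pair of twist regions $\sigma_{2}^{-b_{2k}}\sigma_{1}^{b_{2k+1}}$ at each step. This keeps every intermediate diagram canonical and mirrors the induction the paper itself uses in the proof of Theorem~\ref{tutte}. The delicate points you flag---the closure type and the verification that consecutive blocks share exactly one black region---are genuine, but they are picture checks rather than obstacles, and your proposed small examples are the right sanity tests.
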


\begin{defn}
The Tait's graph corresponding to the the canonical rational link diagram will be called the canonical Tait's graph of the given rational link.
\end{defn}

The Jones polynomial of an oriented link can be expressed via the Tutte polynomial of the Tait's graph in \cite{B} by the following theorem:
\begin{thm} \label{jones via tutte}
The Jones polynomial $V_l(t)$ of an alternating link $L$ can be obtained from the Tutte polynomial $\chi(G;x,y)$ of the assocaited Tait's graph $G$ by the following equation:
\begin{equation*}
V_L(t)=(-1)^{w}t^{\frac{a-b-3w}{4}} \chi(G;-t,-t^{-1})
\end{equation*}
where  $a$ is the number of white regions, $b$ is the number of black regions, and $w$ is the writhe of the link diagram.

\end{thm}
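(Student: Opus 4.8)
The plan is to prove the identity by matching the Kauffman bracket state sum of \cite{Ka} against the subset (Whitney rank) expansion of the Tutte polynomial $\chi$ of the Tait graph. Recall that the bracket of a diagram $D$ with crossing set of size $c$ is
\[
\langle D\rangle=\sum_{S}A^{\,\alpha(S)-\beta(S)}\,(-A^{2}-A^{-2})^{\,|S|-1},
\]
the sum running over the $2^{c}$ states $S$ (a choice of an $A$- or a $B$-smoothing at each crossing), where $\alpha(S),\beta(S)$ count the $A$- and $B$-smoothings and $|S|$ is the number of loops in the totally smoothed picture, and that $V_{L}(t)=\bigl((-A)^{-3w}\langle D\rangle\bigr)$ after the substitution relating $A$ to $t$. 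Since $D$ is alternating we fix the checkerboard colouring with outer region white and let $G$ be the black Tait graph, so that $|V(G)|=b$, $|E(G)|=c$, the white regions are exactly the faces of $G$, and Euler's formula gives $a+b=c+2$; we may assume $D$, hence $G$, connected, the general case following from multiplicativity (parts (4)--(5) of Definition \ref{def of tutte} on the $\chi$ side, and the behaviour of $\langle\cdot\rangle$ and of the region counts under splitting $D$ on the other). The single geometric input is the well-known fact that, for an alternating diagram with this colouring, at every crossing the $A$-smoothing amalgamates the two incident black regions and the $B$-smoothing amalgamates the two incident white regions.

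The first step is the bijection $S\leftrightarrow H$, where $H\subseteq E(G)$ is the set of crossings at which $S$ chooses the $A$-smoothing. Then $\alpha(S)-\beta(S)=2|H|-c$, the black regions of $D$ get amalgamated exactly along the spanning subgraph $(V(G),H)$, and the white regions get amalgamated along the complementary edge set in the planar dual graph. The crux --- and the part I expect to be the main obstacle --- is to express the loop count $|S|$ in terms of $H$: reading the smoothed picture as a cell decomposition of $S^{2}$ and using planar (matroid) duality together with $a+b=c+2$ and $r_{G}(E)=b-1$, one obtains the clean identities
\[
|S|-1=\bigl(|H|-r_{G}(H)\bigr)+\bigl(r_{G}(E)-r_{G}(H)\bigr),
\]
\[
2|H|-c=(b-a)+2\bigl(|H|-r_{G}(H)\bigr)-2\bigl(r_{G}(E)-r_{G}(H)\bigr);
\]
in words, the exponent of $(-A^{2}-A^{-2})$ is the sum of the nullity and the corank of $H$, and the exponent of $A$ is $b-a$ plus twice their difference. (I would verify these on a small example such as the trefoil.)

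Substituting these back into the state sum and pulling the factors $A^{2}(-A^{2}-A^{-2})=-1-A^{4}$ and $A^{-2}(-A^{2}-A^{-2})=-1-A^{-4}$ out of the nullity- and corank-powers respectively yields
\[
\langle D\rangle=A^{\,b-a}\sum_{H\subseteq E(G)}(-1-A^{-4})^{\,r_{G}(E)-r_{G}(H)}\,(-1-A^{4})^{\,|H|-r_{G}(H)} .
\]
The sum on the right is precisely the subset expansion $\sum_{H}(x-1)^{r(E)-r(H)}(y-1)^{|H|-r(H)}=\chi(G;x,y)$ --- which one identifies with Definition \ref{def of tutte} by the usual deletion--contraction induction --- evaluated at $x=-A^{-4}$, $y=-A^{4}$. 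Hence $\langle D\rangle=A^{\,b-a}\,\chi\bigl(G;-A^{-4},-A^{4}\bigr)$.

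It then remains to feed this into $V_{L}(t)=(-A)^{-3w}\langle D\rangle$. Under the substitution relating $A$ to $t$ the evaluation $\chi(G;-A^{-4},-A^{4})$ becomes $\chi(G;-t,-t^{-1})$, the monomial $A^{\,b-a}$ becomes $t^{(a-b)/4}$, and the writhe correction $(-A)^{-3w}$ becomes $(-1)^{w}t^{-3w/4}$, so that everything collects into
\[
V_{L}(t)=(-1)^{w}\,t^{\frac{a-b-3w}{4}}\,\chi(G;-t,-t^{-1}),
\]
which is the assertion. The only non-formal inputs are Euler's relation $a+b=c+2$ (used throughout to trade graph data for region counts) and a check on the unknot and the Hopf link, which pins down the overall monomial and the precise sign of the $t^{\pm 3w/4}$ contribution, i.e.\ fixes the $A$--$t$ convention to the one implicit in \cite{B}. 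As an alternative route one could instead argue by induction on $c$: the bracket skein relation reproduces the deletion--contraction rule of Definition \ref{def of tutte}(6) at any crossing that is neither a bridge nor a loop of $G$, and bridges and loops of $G$ correspond to nugatory crossings, at which both $\langle D\rangle$ and $\chi$ acquire matching monomial factors.
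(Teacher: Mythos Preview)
The paper does not prove Theorem~\ref{jones via tutte}: it is quoted from \cite{B} as background (``The Jones polynomial of an oriented link can be expressed via the Tutte polynomial of the Tait's graph in \cite{B} by the following theorem''), and no argument is supplied. So there is nothing on the paper's side to compare your proof against; the theorem functions here purely as an imported tool.

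Your argument is the standard Thistlethwaite proof and is sound in structure: the state--subgraph bijection, the identity $|S|-1=\bigl(|H|-r_{G}(H)\bigr)+\bigl(r_{G}(E)-r_{G}(H)\bigr)$, and the recognition of the resulting sum as the rank--nullity (subset) expansion of $\chi$ are all correct. One concrete slip to fix: the three substitutions in your final paragraph are not simultaneously consistent. The first two force $A=t^{-1/4}$ (so that $-A^{-4}=-t$, $-A^{4}=-t^{-1}$, and $A^{\,b-a}=t^{(a-b)/4}$), but under that same convention $(-A)^{-3w}=(-1)^{w}t^{+3w/4}$, not $(-1)^{w}t^{-3w/4}$. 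The missing sign is hiding in the interaction between your blanket assertion ``the $A$-smoothing amalgamates the two incident black regions'' and the paper's crossing-sign convention in Figure~\ref{figure3}; you are right that a small-example check resolves it, but it would be cleaner to inspect one crossing of the canonical diagram at the outset, record which smoothing actually merges the black regions and which sign those crossings carry, and then carry a single consistent convention through rather than matching the monomial post hoc.
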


\begin{defn}
The writhe of a diagram of an oriented link is the number of the crossings of type $L_{+}$ minus the number of crossings of type $L_{-}$ as given in figure \ref{figure3}.
\end{defn}

 \begin{figure}[h]
    \centering
        \includegraphics[width=16cm,height=10cm]{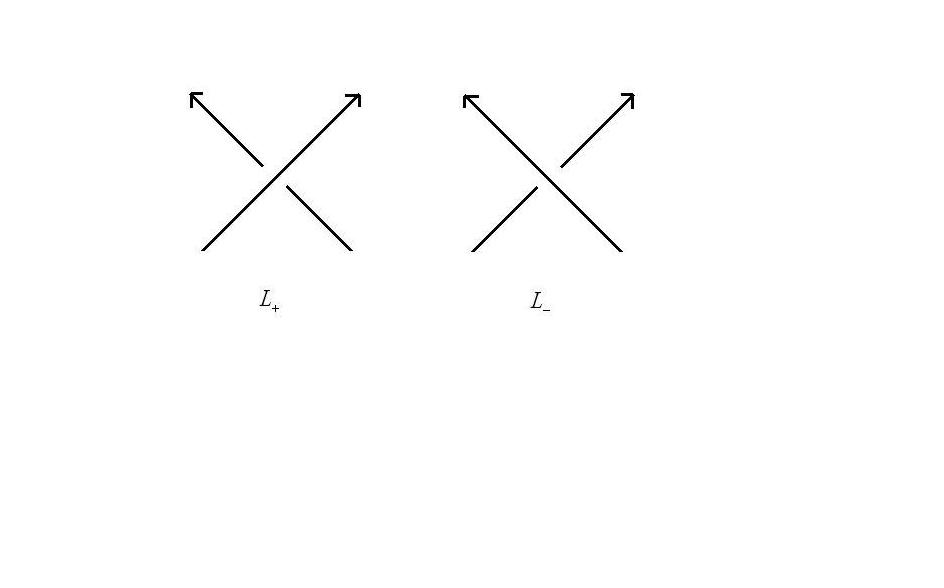}
        \vspace{-5cm}
        \caption{$L_+$, and $L_{-}$}
    \label{figure3}
\end{figure}


\section{The Tutte polynomial of the canonical Tait's Graphs}

We give a formula for the Tutte polynomial of the canonical Tait's graph of any rational link in terms of the denominators of the canonical continued fraction of the slope of the given rational link. First, we recall that a cycle graph of length $p$ is a graph with $p$ vertices and $p$ consecutive edges such that each vertex is incident to two edges.


We quote the following lemmas for later use whose proofs can be
found in any basic reference of graph theory see for example
\cite{W}.

\begin{lemm}\label{tutte of cycle}
Let $C_p$ be a cycle graph with $p$ edges then the Tutte polynomial is
\begin{equation*}
\chi (C_p)=\frac{x^p-1}{x-1}+y-1.
\end{equation*}
\end{lemm}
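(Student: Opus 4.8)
The plan is to prove the formula by induction on $p$, using the deletion--contraction relation of item~(\ref{6}) in Definition~\ref{def of tutte}. Along the way I will need the elementary fact that the Tutte polynomial of a path with $k$ edges equals $x^{k}$; this follows by an immediate induction, since every edge of a path is a bridge, so splitting the path at a cut vertex and applying item~(\ref{4}) reduces to a single edge, whose polynomial is $x$ by item~(2).

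For the base case take $p=1$: then $C_{1}$ is a single loop, so $\chi(C_{1})=y$ by item~(3), and the right-hand side equals $\frac{x-1}{x-1}+y-1=y$, as required.

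For the inductive step let $p\geq 2$ and choose any edge $e$ of $C_{p}$. Because $p\geq2$ the edge $e$ is not a loop, and because $C_{p}$ remains connected after deleting any single edge, $e$ is not a bridge; hence item~(\ref{6}) gives $\chi(C_{p})=\chi(C_{p}-e)+\chi(C_{p}\backslash e)$. Now $C_{p}-e$ is a path on $p$ vertices with $p-1$ edges, so $\chi(C_{p}-e)=x^{p-1}$ by the remark above, while contracting $e$ merges its two endpoints and yields the cycle $C_{p-1}$, so $\chi(C_{p}\backslash e)=\frac{x^{p-1}-1}{x-1}+y-1$ by the inductive hypothesis. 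Adding these and simplifying,
\[
\chi(C_{p})=x^{p-1}+\frac{x^{p-1}-1}{x-1}+y-1=\frac{x^{p-1}(x-1)+x^{p-1}-1}{x-1}+y-1=\frac{x^{p}-1}{x-1}+y-1,
\]
which closes the induction.

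I do not expect any genuine obstacle: the argument is a routine deletion--contraction induction. The only points that require a moment of care are checking that the chosen edge is legitimately neither a loop nor a bridge (so that item~(\ref{6}) may be applied), identifying $C_{p}\backslash e$ with $C_{p-1}$, and the auxiliary computation of the Tutte polynomial of a path.
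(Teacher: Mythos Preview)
Your proof is correct. The paper itself does not give a proof of this lemma; it simply quotes the result and refers the reader to a standard graph-theory reference (\cite{W}). The deletion--contraction induction you wrote is exactly the routine argument such a reference would contain, so there is nothing to compare.
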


\begin{lemm}\label{tutte of dual}
The Tutte polynomial of the dual graph of a graph $G$ equals to the Tutte polynomial of the original graph after interchanging $x$ and $y$.
\end{lemm}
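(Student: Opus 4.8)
The plan is to prove the identity $\chi(G^{*};x,y)=\chi(G;y,x)$ for a connected plane graph $G$ with dual $G^{*}$ by induction on the number of edges of $G$, using the recursion of Definition \ref{def of tutte}. The base case is the one-vertex graph, whose dual is again a one-vertex graph; both Tutte polynomials equal $1$, and the identity is vacuous since $1$ involves neither $x$ nor $y$.

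For the inductive step I would distinguish two cases. If every edge of $G$ is a loop or a bridge, then $G$ is built up by one-vertex amalgamations from single edges and single loops, so the one-vertex amalgamation rule together with the single-edge and single-loop rules gives $\chi(G;x,y)=x^{i}y^{j}$, where $i$ and $j$ are the numbers of bridges and loops of $G$. Plane duality sends a bridge of $G$ to a loop of $G^{*}$ and a loop of $G$ to a bridge of $G^{*}$, so $G^{*}$ has $j$ bridges and $i$ loops, and therefore $\chi(G^{*};x,y)=x^{j}y^{i}=\chi(G;y,x)$.

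Otherwise $G$ has an edge $e$ that is neither a loop nor a bridge; since $e$ is not a bridge, $G-e$ is connected, and $G/e$ is connected as well, each with one fewer edge. Let $e^{*}$ be the corresponding edge of $G^{*}$. The two inputs I would take from the theory of plane duality are that $e^{*}$ is again neither a loop nor a bridge (duality interchanges loops and bridges) and that duality interchanges deletion and contraction, i.e. $(G-e)^{*}=G^{*}/e^{*}$ and $(G/e)^{*}=G^{*}-e^{*}$. Applying the deletion--contraction rule to $G^{*}$ at $e^{*}$, then the inductive hypothesis to $(G/e)^{*}$ and $(G-e)^{*}$, and finally the deletion--contraction rule for $G$ at $e$ with the roles of $x$ and $y$ exchanged gives
\begin{align*}
\chi(G^{*};x,y)&=\chi(G^{*}-e^{*};x,y)+\chi(G^{*}/e^{*};x,y)\\
&=\chi(G/e;y,x)+\chi(G-e;y,x)=\chi(G;y,x),
\end{align*}
which closes the induction.

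The main obstacle is not the recursion, which is routine bookkeeping, but the careful verification of the two plane-duality facts (that loops and bridges are swapped, and that deletion and contraction are swapped, when passing to the dual). These depend on working with compatible embeddings of $G$ and $G^{*}$ in the sphere and are standard (see \cite{W}); in the write-up I would simply quote them or include a brief self-contained check.
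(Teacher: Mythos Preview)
Your induction argument is correct and is the standard proof of this duality identity. Note, however, that the paper does not actually prove this lemma: it is stated without proof, with a remark that ``proofs can be found in any basic reference of graph theory'' and a citation to \cite{W}. So there is no paper proof to compare against; your write-up supplies exactly the kind of argument the paper chose to omit, and your closing remark that the two plane-duality facts could simply be quoted from \cite{W} is in the same spirit as what the paper itself does.
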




The canonical Tait's graph of any rational link is a graph as shown in figure \ref{figure4}
where $b_{i}$ denotes the number of edges that are parallel if $i$ is odd and collinear
if $i$ is even with $2+\sum\limits_{i=1}^{k} b_{2i}$ vertices and  $\sum\limits_{i=1}^{2k + 1} b_i $ edges. Let $E=\{b_2,b_4,\ldots ,b_{2k}\}$, $O=\{b_1,b_3,\ldots ,b_{2k+1}\}$, $C=\{x: x=\sum\limits_{i=l}^{m}{b_{2i-1}}, 1 \leq l,m \leq k+1\}$ and $\rho(E),\text{ }\rho(C)$ denote the power sets for the sets $E,\text{and }C $ respectively. We define $f:\rho (E)  \rightarrow \rho (C)$ by 
$f(A)=B$, where $x\in B$ iff $ x$ is one of the following forms

\begin{enumerate}
 \item If $A = \phi$, then $ x =\sum\limits_{i=1}^{2k + 1} b_i$.
 \item If $b_l,b_m\in A$ and $b_n\notin A$ for $l< n< m$, then  $x=\underset{l\leq i\leq m, b_{i} \in O}{\sum} b_i$.
 \item If $b_m\in A$ and $b_n\notin A$ for $1\leq n< m$, then $ x=\underset{1\leq i\leq m, b_{i} \in O}{\sum} b_{i}$.
 \item If $b_l\in A$ and $b_n\notin A$ for $l< n< 2k+1$, then $x=\underset{l\leq i\leq 2k+1, b_{i} \in O}{\sum}b_i$.

\end{enumerate}

\begin{figure}[h]
        \includegraphics[width=15cm,height=9cm]{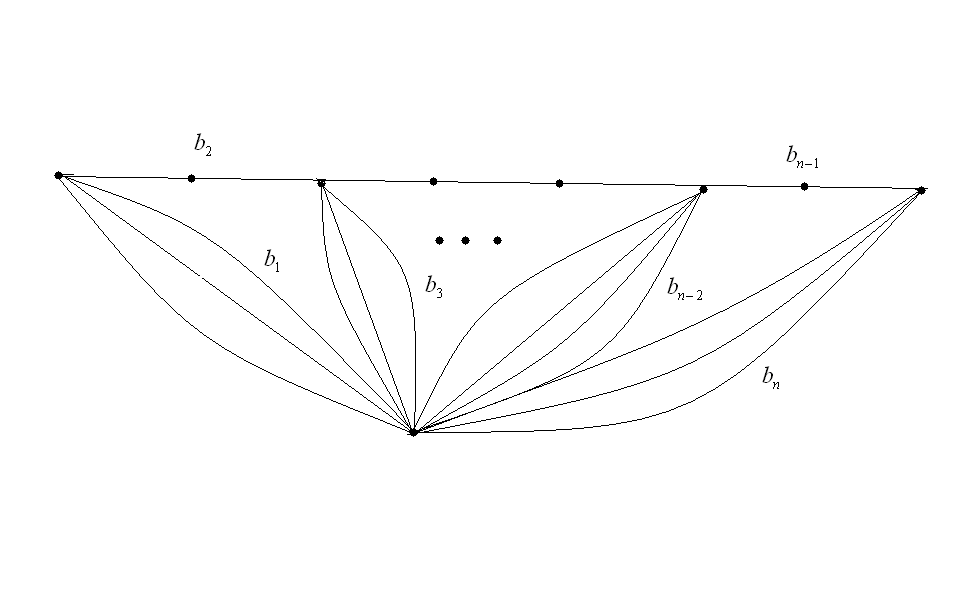}
            \vspace{-2cm}
    \caption{The Graph related to the sequence $\{b_1,\ldots,b_{n}\}$ of positive integers.}
    \label{figure4}
\end{figure}

Now, we state the main theorem of this section:
\begin{thm} \label{tutte}
The Tutte polynomial of the graph shown in figure \ref{figure4} is given by the formula
\begin{equation*}
\chi (G)=\sum_{A\subseteq E}\prod_{{b_i}\in A}(\frac{x^{b_i}-1}{x-1}) \prod_{{\alpha_i}\in f(A)}(\frac{y^{\alpha_i}-1}{y-1}+x-1).
\end{equation*}

\end{thm}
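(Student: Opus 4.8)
The plan is to induct on the structure of the graph $G$ in figure \ref{figure4} by repeatedly applying the deletion–contraction rule (Definition \ref{def of tutte}\eqref{6}) to the collinear blocks, while using the multiplicativity under one-vertex unions (Definition \ref{def of tutte}\eqref{4}) and Lemma \ref{tutte of cycle} to evaluate the pieces that split off. Concretely, $G$ consists of $k+1$ "parallel bundles" $b_1,b_3,\dots,b_{2k+1}$ (each a collection of parallel edges between two consecutive vertices) joined by $k$ "collinear bundles" $b_2,b_4,\dots,b_{2k}$ (each a path of $b_{2i}$ edges in series), arranged so that the whole diagram closes up. I would first record the two base observations: a bundle of $b$ parallel edges between two vertices contributes the factor $\frac{y^{b}-1}{y-1}$ when it forms a "standalone" multi-edge (its Tutte polynomial, since parallel edges are handled by the dual of Lemma \ref{tutte of cycle}), whereas a cycle obtained by closing a path of total length $\alpha$ together with one more edge contributes $\frac{y^{\alpha}-1}{y-1}+x-1$ by Lemma \ref{tutte of cycle} combined with Lemma \ref{tutte of dual} — this is exactly the second factor in the claimed formula, so the indexing set $C$ and the map $f$ are designed to bookkeep which cycles appear.

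The main induction would be on $k$ (the number of collinear bundles), or equivalently on $\sum b_{2i}$. For the inductive step, pick one collinear bundle, say the edges comprising $b_{2j}$. Each such edge is, generically, neither a loop nor a bridge, so deletion–contraction applies. Contracting all $b_{2j}$ edges merges the two parallel bundles on either side into a single parallel bundle of size $b_{2j-1}+b_{2j+1}$ and reduces $k$ by one; deleting one of the $b_{2j}$ edges turns the rest of that collinear bundle into a bridge-path that can be contracted, disconnecting the graph into two smaller graphs of the same type that share a single vertex, so Definition \ref{def of tutte}\eqref{4} applies and the Tutte polynomial factors. Iterating the binary choice "absorb $b_{2i}$ / cut at $b_{2i}$" over all $i=1,\dots,k$ produces exactly a sum over subsets $A\subseteq E$: the bundles $b_{2i}\in A$ are the ones where we "cut" (each contributing, after the path collapses to a single edge and the two ends get identified in the closure, a cycle whose length is the corresponding sum of odd-indexed $b$'s — precisely the four cases in the definition of $f$), and the bundles $b_{2i}\notin A$ are absorbed, leaving the surviving parallel bundles whose sizes are the consecutive sums $\sum_{l\le i\le m,\,b_i\in O} b_i$. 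The parallel bundles that are never touched by a cut, but instead get merged and then closed into a loop-free cycle, contribute the first-type factors; one must check the boundary behaviour (cases 1, 3, 4 of $f$) corresponding to $A=\phi$, to $b_{2j}$ being the first cut, and to $b_{2j}$ being the last cut, where the closure of the braid glues the extreme parallel bundles into the relevant cycles. Finally, collecting: each $b_i\in A$ region contributes $\frac{x^{b_i}-1}{x-1}$ (a path of $b_i$ edges in series evaluates to $x^{b_i}$, but the "cut" operation that leaves it as a cycle-to-be is what produces the geometric factor — here I would be careful to match exactly which reduction gives $\frac{x^{b_i}-1}{x-1}$ versus $x^{b_i}$), and each $\alpha_i\in f(A)$ contributes $\frac{y^{\alpha_i}-1}{y-1}+x-1$.

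The hard part will be the bookkeeping: verifying that the recursive cut/absorb process generates precisely the index set $C$ via the four-case definition of $f$, with no double counting and no missing terms, and in particular that the three "boundary" cases for $f$ (the empty set, a cut with nothing earlier, a cut with nothing later) match what happens at the two ends of the braid closure. It is essentially a careful combinatorial identification of a deletion–contraction recursion tree with the subset sum, so I would organize the proof as: (i) state the two elementary Tutte evaluations as sub-lemmas; (ii) prove a single deletion–contraction step for one collinear bundle, showing the two resulting graphs are of the same shape with one fewer collinear bundle, together with an explicit description of how the parallel-bundle sizes and the pending cycles transform; (iii) run the induction, checking that at the base case $k=0$ the graph is a single parallel bundle closed into a cycle and the formula reduces correctly; (iv) reconcile the accumulated data with the explicit formula for $f$, handling the endpoint cases separately. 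I expect step (ii), and specifically the precise claim that deleting one edge of a collinear bundle forces the remaining edges to become bridges whose contraction disconnects $G$ at exactly one vertex, to be the linchpin that makes the multiplicativity (hence the product over $f(A)$) appear.
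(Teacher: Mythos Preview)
Your plan is essentially the paper's own proof: induct on $k$, apply deletion--contraction to the edges of a collinear block, invoke multiplicativity at cut vertices for the pieces that split off, and check the base case $k=0$ directly. The paper streamlines your outline in two ways: it always attacks the \emph{last} collinear bundle $b_{2k}$ (so the two inductive subgraphs are simply $G'$ for $[b_1,\dots,b_{2k-1}]$ and $G^{\alpha}$ for $[b_1,\dots,b_{2k-1}+b_{2k+1}]$, each with one fewer collinear block), and it resolves your stated uncertainty about $\frac{x^{b_i}-1}{x-1}$ versus $x^{b_i}$ by iterating the single-edge recursion down the $b_{2k}$ path---each deletion leaves the remaining path edges as bridges contributing a power of $x$, and summing these over the $b_{2k}$ steps yields the geometric factor $\frac{x^{b_{2k}}-1}{x-1}$ in front of $\chi(G')$.
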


\begin{proof}
We will use induction on $k$. If $k=0$ then $G$ will be the dual graph of $C_{b_1}$ so from Lemmas \ref{tutte of dual} and \ref{tutte of cycle} we get
\[
\chi(G)=\frac{y^{b_1}-1}{y-1}+x-1.
\]

Now for $k = m$, we apply part \ref{6} of Definition \ref{def of
tutte} on one of the $b_{2k}$-edges that are collinear in figure
\ref{figure4} and use part \ref{4} of the same definition to get
\[
\chi(G)=x^{b_{2k}-1} (\frac{y^{b_{2k+1}}-1}{y-1}+x-1)\chi (G')+\chi(G''),
\]
where $G', G''$ are the canonical Tait's graphs corresponding to the canonical continued fractions $[b_1,b_2,\ldots,b_{2k-1}]$, and $ [b_1,b_2,\ldots,b_{2k-1},b_{2k}-1,b_{2k+1}]$ respectively. 
Thus repeating this process $b_{2k}-1$ times on the graph $G''$, we obtain
\begin{align*}\label{G}
\chi(G) = \ & (\frac{x^{b_{2k}}-1}{x-1})(\frac{y^{b_{2k+1}}-1}{y-1}+x-1)\chi (G')+\chi(G^{\alpha})\\
= \ & (\frac{x^{b_{2k}}-1}{x-1})(\frac{y^{b_{2k+1}}-1}{y-1}+x-1)\sum_{A\subseteq E'}\prod_{{b_i}\in A}(\frac{x^{b_i}-1}{x-1}) \prod_{{\alpha_i}\in f(A)}(\frac{y^{\alpha_i}-1}{y-1}+x-1) \\ & \hspace{5cm}   +   \sum_{A\subseteq E^\alpha}\prod_{{b_i}\in A}(\frac{x^{b_i}-1}{x-1}) \prod_{{\alpha_i}\in f(A)}(\frac{y^{\alpha_i}-1}{y-1}+x-1)\\
= \ & \sum_{A\subseteq E, b_{2k} \in A}\prod_{{b_i}\in A}(\frac{x^{b_i}-1}{x-1}) \prod_{{\alpha_i}\in f(A)}(\frac{y^{\alpha_i}-1}{y-1}+x-1) \\ & \hspace{5cm}   +  \sum_{A\subseteq E, b_{2k} \notin A}\prod_{{b_i}\in A}(\frac{x^{b_i}-1}{x-1}) \prod_{{\alpha_i}\in f(A)}(\frac{y^{\alpha_i}-1}{y-1}+x-1) \\
= \ &  \sum_{A\subseteq E}\prod_{{b_i}\in A}(\frac{x^{b_i}-1}{x-1}) \prod_{{\alpha_i}\in f(A)}(\frac{y^{\alpha_i}-1}{y-1}+x-1),
\end{align*}
where $G^\alpha$ is the canonical Tait's graphs corresponding to the canonical continued fraction $[b_1,b_2,\ldots,b_{2k-1}+b_{2k+1}]$ and the second equality follows from the induction hypothesis on $G'$ and $G^{\alpha}$.

\end{proof}

\begin{coro} \label{cor1}
The Tutte polynomial of the canonical Tait's graph that corresponds to the rational link $C(b_{1})$ in Conway's notation in \cite{C} 
is given by
\[
\chi(G;x,y)=\frac{y^{b_1}-1}{y-1}+x-1.
\]
\end{coro}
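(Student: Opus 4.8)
The plan is to deduce this directly from Theorem \ref{tutte} by specializing to the case of a canonical continued fraction with a single denominator. The rational link $C(b_1)$ in Conway's notation has slope $b_1=[b_1]$ (that is, $p=b_1$ and $q=1$), so its canonical continued fraction has $n=1$ denominator, and the associated canonical Tait's graph is the graph $G$ of figure \ref{figure4} in the extreme case $k=0$. Concretely, $G$ is the graph on two vertices joined by $b_1$ parallel edges, i.e.\ the planar dual of the cycle graph $C_{b_1}$.

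First I would record that with $k=0$ the index set $E=\{b_2,b_4,\ldots,b_{2k}\}$ is empty, so its power set $\rho(E)$ consists of the single element $A=\phi$, and the sum in Theorem \ref{tutte} collapses to one term. For that term the product $\prod_{b_i\in A}\bigl(\tfrac{x^{b_i}-1}{x-1}\bigr)$ is an empty product, equal to $1$. Next I would evaluate $f(\phi)$: by case (1) of the definition of $f$ one has $f(\phi)=\bigl\{\sum_{i=1}^{2k+1}b_i\bigr\}=\{b_1\}$, since $b_1$ is the only odd-indexed denominator present. Hence the remaining product reduces to the single factor $\tfrac{y^{b_1}-1}{y-1}+x-1$, and Theorem \ref{tutte} yields the claimed formula.

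Alternatively, and perhaps more transparently, one may simply invoke the base case already verified inside the proof of Theorem \ref{tutte}: when $k=0$, $G$ is the dual of $C_{b_1}$, so Lemma \ref{tutte of cycle} gives $\chi(C_{b_1})=\tfrac{x^{b_1}-1}{x-1}+y-1$, and Lemma \ref{tutte of dual} (interchanging $x$ and $y$) gives $\chi(G)=\tfrac{y^{b_1}-1}{y-1}+x-1$. Either route is a two-line argument.

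There is essentially no substantive obstacle here; the only points requiring care are the bookkeeping conventions — that the empty subset of $E$ contributes an empty product equal to $1$, and that case (1) of the definition of $f$ really returns the singleton $\{b_1\}$ when $n=1$ — together with the elementary identification of Conway's $C(b_1)$ with the rational link of slope $[b_1]$, whose canonical diagram (and hence canonical Tait's graph) is the one appearing in figure \ref{figure4} with $k=0$.
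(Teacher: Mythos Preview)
Your proof is correct and follows essentially the same approach as the paper: identify that $C(b_1)$ has canonical continued fraction $[b_1]$, so $E$ is empty and Theorem \ref{tutte} collapses to the single term $\tfrac{y^{b_1}-1}{y-1}+x-1$. Your alternative route via Lemmas \ref{tutte of cycle} and \ref{tutte of dual} is also fine and simply reproduces the base case of the induction in Theorem \ref{tutte}.
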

\begin{proof}
The result follows since the canonical continued fraction the rational link $C(b_{1})$ is $[b_1]$ and $E=\{\phi\}$.
\end{proof}
\begin{coro} \label{cor2}
The Tutte polynomial of the canonical Tait's graph that corresponds to the rational link $C(b_{1},b_{2})$ in Conway's notation in \cite{C} 
is given by
\[
\chi(G;x,y)=x(\frac{x^{b_{2} - 1}-1}{x-1})(\frac{y^{b_1}-1}{y-1}+x-1)+\frac{y^{b_1+1}-1}{y-1}+x-1.
\]
\end{coro}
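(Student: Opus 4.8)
The plan is to deduce the formula from Theorem~\ref{tutte} in exactly the way Corollary~\ref{cor1} was obtained, the only additional ingredients being the passage from Conway's symbol to the canonical continued fraction and a short evaluation of the map $f$ on the two-element power set $\rho(E)$.

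First I would record that the slope of $C(b_{1},b_{2})$ has continued fraction $[b_{1},b_{2}]$, which has an even number of denominators; hence, by the normalization $[b_{1},\dots,b_{n}-1,1]$ used in the proof of the existence-and-uniqueness lemma for canonical continued fractions in Section~1, the canonical continued fraction is $[b_{1},b_{2}-1,1]$. In particular $k=1$ in Figure~\ref{figure4}, and in the notation introduced just before Theorem~\ref{tutte} we have $E=\{b_{2}-1\}$ and $O=\{b_{1},1\}$, so $\rho(E)=\{\,\emptyset,\ \{b_{2}-1\}\,\}$.

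Next I would evaluate the summand of Theorem~\ref{tutte} on each of these two subsets. For $A=\emptyset$, part~(1) in the definition of $f$ returns the total of the odd-indexed denominators $b_{1}$ and $1$, namely $b_{1}+1$ (the largest element of $C$), so the contribution is $\dfrac{y^{\,b_{1}+1}-1}{y-1}+x-1$. For $A=\{b_{2}-1\}=E$, the unique even-indexed denominator occupies position $2$, so parts~(3) and~(4) of the definition of $f$ apply with $l=m=2$ and give $f(A)=\{b_{1},1\}$; since $\dfrac{y^{1}-1}{y-1}+x-1=x$, the corresponding term is $\dfrac{x^{\,b_{2}-1}-1}{x-1}\Bigl(\dfrac{y^{\,b_{1}}-1}{y-1}+x-1\Bigr)x$. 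Adding the two contributions yields exactly the asserted identity; reading $\dfrac{x^{0}-1}{x-1}$ as $0$, it also degenerates correctly to Corollary~\ref{cor1} with $b_{1}$ replaced by $b_{1}+1$ in the edge case $b_{2}=1$.

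The only place where care is needed is the first step: one must keep the indices of $E$, $O$ and of the map $f$ aligned with the \emph{canonical} fraction $[b_{1},b_{2}-1,1]$ rather than with $[b_{1},b_{2}]$, and remember that an $\alpha_{i}=1$ factor collapses the bracket $\tfrac{y^{\alpha_{i}}-1}{y-1}+x-1$ to $x$. Everything after that is the routine algebraic simplification indicated above, so I do not anticipate a genuine obstacle.
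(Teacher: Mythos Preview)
Your argument is correct and follows exactly the route the paper takes: identify the canonical continued fraction of $C(b_{1},b_{2})$ as $[b_{1},b_{2}-1,1]$ so that $E=\{b_{2}-1\}$, and then evaluate the formula of Theorem~\ref{tutte} on the two subsets of $E$. You have simply spelled out the evaluation that the paper leaves implicit (including the collapse $\tfrac{y^{1}-1}{y-1}+x-1=x$ and the side remark on $b_{2}=1$), but the method is identical.
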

\begin{proof}
The result follows since the canonical continued fraction the rational link $C(b_{1},b_{2})$ is $[b_1,b_{2} - 1,1]$ and $E=\{b_{2} - 1\}$.
\end{proof}
\section{Main Results}

For this section, we let $D$ be the canonical link diagram of the rational link $L$ with slope $\frac{p}{q}$ of canonical continued fraction $[b_{1},b_{2},\ldots,b_{n}]$.

We consider the case where $\frac{p}{q} \geq 1$ since the other case yields the mirror image of the link with slope $|\frac{p}{q}|$ and the relation between the Jones polynomial of a link and the Jones polynomial of its mirror image is given by the following theorem:

\begin{thm}
Suppose $K^*$ is the mirror image of a link $K$, then
\[
V_{K^*}(t)=V_{K}(t^{-1}).
\]
\end{thm}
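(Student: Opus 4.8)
The plan is to prove this through the Kauffman bracket state model of \cite{Ka}, which is the computational engine behind Theorem \ref{jones via tutte} and is already in use throughout the paper; this has the advantage of applying to \emph{all} links, not only alternating ones, which is exactly the generality in which the statement is invoked. Recall that for a diagram $D$ of an unoriented link one forms the bracket $\langle D\rangle\in\mathbb Z[A,A^{-1}]$ by resolving each crossing in its two ways, weighting the $A$-smoothing by $A$ and the $B$-smoothing by $A^{-1}$, and assigning to a disjoint union of $c$ circles the value $(-A^{2}-A^{-2})^{c-1}$; then for an oriented diagram $D$ of $K$ one has $V_{K}(t)=\bigl((-A)^{-3w(D)}\langle D\rangle\bigr)\big|_{A=t^{-1/4}}$, where $w(D)$ is the writhe.

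First I would fix a diagram $D$ of $K$ and let $D^{*}$ be its mirror image, obtained by changing each overcrossing to an undercrossing and vice versa. The key point is purely local: reflecting a single crossing interchanges its $A$-smoothing with its $B$-smoothing. Hence each state of $D^{*}$ has the exponent of $A$ negated relative to the corresponding state of $D$, while the circle contribution $(-A^{2}-A^{-2})^{c-1}$ is invariant under $A\mapsto A^{-1}$. This gives the identity
\[
\langle D^{*}\rangle(A)=\langle D\rangle(A^{-1}).
\]

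Next I would bring in the writhe normalization. Changing every crossing reverses the writhe, so $w(D^{*})=-w(D)$, and therefore
\[
(-A)^{-3w(D^{*})}\langle D^{*}\rangle(A)=(-A)^{3w(D)}\langle D\rangle(A^{-1})=\Bigl((-B)^{-3w(D)}\langle D\rangle(B)\Bigr)\Big|_{B=A^{-1}},
\]
using $(-1)^{-3w}=(-1)^{3w}$ to match the sign factors. Substituting $A=t^{-1/4}$ on the left then turns $B=A^{-1}$ on the right into $B=t^{1/4}=(t^{-1})^{-1/4}$, so the right-hand side is precisely $V_{K}$ evaluated at $t^{-1}$, which yields $V_{K^{*}}(t)=V_{K}(t^{-1})$.

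There is no serious obstacle here, as this is a classical fact; the only thing requiring care is the bookkeeping around the sign factor $(-A)^{-3w}$ under $A\mapsto A^{-1}$ and the consistency of the fractional substitution $A=t^{-1/4}$. An alternative I could present instead is to invoke the uniqueness of the Jones polynomial as the invariant that equals $1$ on the unknot and satisfies the skein relation $t^{-1}V_{L_{+}}-tV_{L_{-}}=(t^{1/2}-t^{-1/2})V_{L_{0}}$: mirroring swaps $L_{+}\leftrightarrow L_{-}$ and fixes $L_{0}$, so one checks directly that $t\mapsto V_{K^{*}}(t)$ and $t\mapsto V_{K}(t^{-1})$ obey the same skein relation and agree on the unknot, hence coincide. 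I would give the bracket argument as the main proof, since it stays within the framework set up in this paper.
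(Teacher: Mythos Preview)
Your argument via the Kauffman bracket is correct and is the standard proof of this classical fact; the alternative skein-relation argument you sketch is also fine. However, the paper itself gives \emph{no proof} of this theorem: it is quoted as a known result at the start of Section~4 solely to justify restricting attention to slopes $\frac{p}{q}\geq 1$, and no argument is supplied. So there is nothing in the paper to compare your proof against---you have filled in a proof the authors chose to omit, and your choice of the bracket model is well suited to the paper's framework.
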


We want to compute the number of white regions, the number of white
regions, and the writhe of the canonical diagram $D$ in terms of the
denominators of the canonical continued fraction that will be used
in the Theorem \ref{main1}.

\begin{lemm} \label{abw}
Let $G$ be the corresponding Tait's graph of the canonical diagram $D$, then
\begin{align*}
a& =k+1+\sum\limits_{i=1}^{k+1}{(b_{2i-1}-1)}= \sum\limits_{i=1}^{k+1}b_{2i - 1}.\\
b& = |V_{G}|=2+\sum\limits_{i = 1}^{k}b_{2i}.\\
\end{align*}

\end{lemm}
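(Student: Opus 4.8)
The plan is to obtain $b$ directly from the construction of the Tait graph and to obtain $a$ from Euler's formula; neither step needs any case analysis. By the checkerboard rule used to build $G$, the black regions of $D$ correspond bijectively to the vertices of $G$, so $b=|V_G|$, and the value $|V_G|=2+\sum_{i=1}^{k}b_{2i}$ was already recorded when the graph of figure~\ref{figure4} was described in Section~3 (that graph has $2+\sum_{i=1}^{k}b_{2i}$ vertices and $\sum_{i=1}^{2k+1}b_i$ edges). This settles the formula for $b$. Note also that $|E_G|$ is the number of crossings of $D$, namely $\sum_{i=1}^{2k+1}b_i$.

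For $a$, I would use that interchanging the two colours replaces $G$ by its dual, so the white regions of $D$ are in bijection with the faces of $G$, the outer white region (which exists by the hypothesis of the lemma quoted before figure~\ref{figure4}) corresponding to the unbounded face; thus $a=|F_G|$ counted with the unbounded face included. Since $D$, being the closure of a $4$-braid, is a connected diagram, its Tait graph $G$ is a connected plane graph, so Euler's formula $|V_G|-|E_G|+|F_G|=2$ applies, giving
\[
a=|F_G|=|E_G|-|V_G|+2=\sum_{i=1}^{2k+1}b_i-\Bigl(2+\sum_{i=1}^{k}b_{2i}\Bigr)+2=\sum_{i=1}^{2k+1}b_i-\sum_{i=1}^{k}b_{2i}=\sum_{i=1}^{k+1}b_{2i-1}.
\]
The other expression in the statement, $a=k+1+\sum_{i=1}^{k+1}(b_{2i-1}-1)$, is the same quantity with a $1$ split off from each of the $k+1$ summands; it is also exactly what a direct face count of figure~\ref{figure4} produces, since after collapsing each parallel bundle and each collinear path one is left with a ``fan''-type graph whose $k$ triangular regions and one outer region give the $k+1$, while each bundle of $b_{2i-1}$ parallel edges contributes a further $b_{2i-1}-1$ bigon faces.

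Since every ingredient ($|V_G|$, $|E_G|$, and the two colour-to-graph bijections) is already in hand, the only point that genuinely needs care is the connectedness of $G$ — equivalently, that $D$ is a connected diagram so that the right-hand side of Euler's formula is $2$ — which is where the fact that we are using the canonical (in particular alternating and connected) diagram is actually used; everything else is bookkeeping with the two sums $\sum b_{2i-1}$ and $\sum b_{2i}$. I would present the Euler computation as the main argument and mention the direct face count only as the transparent reason for the first stated form of $a$.
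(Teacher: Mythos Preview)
Your argument is correct. The paper actually states Lemma~\ref{abw} without proof, relying on the vertex and edge counts $|V_G|=2+\sum_{i=1}^{k}b_{2i}$ and $|E_G|=\sum_{i=1}^{2k+1}b_i$ recorded just before Theorem~\ref{tutte} as making both formulas evident by inspection of figure~\ref{figure4}. Your approach supplies exactly what is implicit there: the checkerboard bijections give $b=|V_G|$ and $a=|F_G|$, and Euler's formula for the connected plane graph $G$ turns the known $|V_G|$ and $|E_G|$ into $a=\sum_{i=1}^{k+1}b_{2i-1}$ without any further counting. Your secondary direct face count (fan with $k+1$ faces plus $b_{2i-1}-1$ bigons per parallel bundle) is also correct for the graph of figure~\ref{figure4} and gives a transparent reason for the first displayed form of $a$; this is presumably the ``obvious'' count the authors had in mind.
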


We associate to the canonical diagram $D$ a permutation $\sigma_{D} \in S_{3}$ on the set $\{1,2,3\}$. We define the permutation $\sigma_{D}$ in terms of the denominators of the canonical continued fraction by

\[ \left(\begin{array}{clrr}      1 & 0 & 0  \\       0  & 0 & 1 \\ 0 & 1 & 0 \end{array}\right)^{b_{1}} \left(\begin{array}{clrr} 0 & 1 & 0  \\       1  & 0 & 0 \\ 0 & 0 & 1 \end{array}\right)^{b_{2}}\ldots \left(\begin{array}{clrr} 1 & 0 & 0  \\       0  & 0 & 1 \\ 0 & 1 & 0 \end{array}\right)^{b_{2k + 1}}\left(\begin{array}{clrr} 1 \\ 2 \\ 3 \end{array}\right) = \left(\begin{array}{clrr} \sigma_{D}(1)  \\   \sigma_{D}(2) \\ \sigma_{D}(3) \end{array}\right). \]

Now the writhe of the canonical diagram $D$ depends on the permutation $\sigma_{D}$. In particular, we have four cases for the writhe and it is given by the following lemma
\begin{prop}\label{writhe}
The writhe $w$ of the canonical diagram $D$ is given recursively by
\[ w =
  \begin{cases}
  b_{1} + b_{2} + \sum\limits_{i=3}^{n}\epsilon_{i}b_{i} ,
     & \text{if $\sigma_{D} = (1)$ or $\sigma_{D} = (23)$ or $\sigma_{D} = (12)$ or $\sigma_{D} = (123)$},
     \\
 -b_{1} + b_{2} + \sum\limits_{i=3}^{n}\epsilon_{i}b_{i} , & \text{if $(\sigma_{D} = (13)$ or $\sigma_{D} = (132))$ and $b_{1}$ is even},
 \\
 -b_{1} - b_{2} + \sum\limits_{i=3}^{n}\epsilon_{i}b_{i} , & \text{if $(\sigma_{D} = (13)$ or $\sigma_{D} = (132))$ and $b_{1}$ is odd},
  \end{cases}
\]
where
\[
\epsilon_{i} =
\begin{cases}
 - \epsilon_{i-2},
     & \text{if $b_{i-1}$ is odd, $i-1$ is even and $\epsilon_{i-1} = 1$},
     \\
     - \epsilon_{i-2},
     & \text{if $b_{i-1}$ is odd, $i-1$ is odd and $\epsilon_{i-1} = -1$ },
     \\
   \epsilon_{i-2} , & \text{otherwise}.
  \end{cases}
\]
\end{prop}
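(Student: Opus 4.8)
The plan is to read the writhe directly off the braid word $\beta=\sigma_1^{b_1}\sigma_2^{-b_2}\sigma_1^{b_3}\cdots\sigma_1^{b_{2k+1}}$ whose closure is $D$. The first observation is that $\beta$ uses only $\sigma_1$ and $\sigma_2$ (figure \ref{figure2}), so the fourth strand of the $4$-braid is never involved in a crossing; the strand permutation of $\beta$ therefore lies in the copy of $S_3$ acting on the first three positions, and unwinding the definition it is exactly the permutation $\sigma_D$ computed by the displayed product of $3\times 3$ permutation matrices. Fix an orientation of $L$ (for a knot the writhe is independent of the choice, and for a two component link we take the orientation coming from the canonical diagram). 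The braid word splits into the maximal blocks $\sigma_1^{b_i}$ for $i$ odd and $\sigma_2^{-b_i}$ for $i$ even; each block is a twist region, so the two strands running through block $i$ keep their relative vertical orientation throughout it, and hence all $b_i$ crossings of block $i$ carry one and the same sign $\epsilon_i\in\{+1,-1\}$. Consequently $w=\sum_{i=1}^{n}\epsilon_i b_i$, and the proposition reduces to computing the $\epsilon_i$.

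Next I would factor $\epsilon_i$ as the product of the blackboard sign of block $i$ --- which is $+1$ for a $\sigma_1$ block and $-1$ for a $\sigma_2^{-1}$ block, hence depends only on the parity of $i$ --- with a co-orientation sign that is $+1$ when the two strands entering block $i$ point the same way and $-1$ when they point oppositely. So the bookkeeping device is the ``orientation state'' carried as $\beta$ is read from top to bottom: which strand occupies each of the three active positions and whether it runs with or against the reading direction. Passing a block of $b_i$ crossings changes this state in a way depending only on the parity of $b_i$: if $b_i$ is even the two strands return to their positions with unchanged orientations, and if $b_i$ is odd they are interchanged, their orientations being modified in the single way dictated by figure \ref{figure2}.

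The recursion then comes from a local comparison: blocks $i$ and $i-2$ act on the same pair of positions, while block $i-1$ acts on a pair overlapping it in exactly one position, the identity of that shared position being governed by the parity of $i$. Tracking the orientation state across block $i-1$ shows that the co-orientation status of block $i$'s pair agrees with that of block $i-2$'s pair unless $b_{i-1}$ is odd and the strand that block $i-1$ delivers into the shared position arrives with the ``wrong'' orientation; writing out this last condition in terms of the parity of $i-1$ and the sign $\epsilon_{i-1}$ (which records the co-orientation of block $i-1$'s pair, corrected by that block's blackboard sign) produces precisely the two displayed cases $\epsilon_i=-\epsilon_{i-2}$, with $\epsilon_i=\epsilon_{i-2}$ otherwise. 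For the initial data $\epsilon_1,\epsilon_2$, the top closure arcs force the strands entering the first and second twist regions into configurations that are determined by how $\sigma_D$ threads the four strands through the caps; running over the six elements of $S_3$ and refining by the parity of $b_1$ shows that $(\epsilon_1,\epsilon_2)$ takes only the values $(1,1)$, $(-1,1)$, $(-1,-1)$, distributed over $\sigma_D$ and the parity of $b_1$ exactly as in the three cases of the statement. Substituting these values into $w=\sum_i\epsilon_i b_i$ gives the claimed formula.

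The delicate part is exactly the orientation bookkeeping of the previous paragraph: keeping straight, separately for $\sigma_1$ blocks and for $\sigma_2^{-1}$ blocks and for each value of $\epsilon_{i-1}$, which strand emerges from block $i-1$ co-oriented with which, and translating the strand identifications made by the closure caps (encoded in $\sigma_D$) into the co-orientation of the first two blocks. I would isolate this in one ``transfer lemma'' that computes the new orientation state from the old one as a function of $b_i\bmod 2$, verify it from the elementary pictures in figures \ref{figure2} and \ref{figure3}, and then obtain Proposition \ref{writhe} by iterating the lemma and reading off signs; once the lemma is in place the remainder is a bounded case check.
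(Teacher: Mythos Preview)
Your approach is essentially the same as the paper's: partition the crossings into the $n$ twist blocks, observe that all crossings in a block share a sign so that $w=\sum_i\epsilon_i b_i$, determine the initial signs $(\epsilon_1,\epsilon_2)$ from the closure (governed by $\sigma_D$), and obtain the recursion for $\epsilon_i$ by tracking how the strand orientations propagate through block $i-1$. The paper carries this out by fixing the case $\sigma_D=(23)$, choosing the orientation so the top arc runs right to left, reading off $\epsilon_1=\epsilon_2=1$, and then reducing the dependence of $\epsilon_m$ on $b_{m-2},b_{m-1},\epsilon_{m-2},\epsilon_{m-1}$ to a finite picture check (replace $b_{m-2},b_{m-1}$ by $1$ or $2$ according to parity); your blackboard-sign $\times$ co-orientation-sign factorization and proposed ``transfer lemma'' package the same case analysis more systematically, but the content is identical.
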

\begin{proof}
We prove the case where $\sigma_{D} = (23)$. In this case, the canonical diagram $D$ will be closed as in figure \ref{figure5}. We choose the orientation in a way where the top arc always goes from right to left and if the diagram has two components then we can assume the orientation on the bottom arc goes from right to left since these two arcs will belong to different components.

The set of all crossings in the canonical diagram $D$ forms a partition of $n$ elements such that $i$-th element of this partition contains all the crossings that form $\sigma_{1}^{b_{i}}$ if $i$ is odd and $\sigma_{2}^{-b_{i}}$ if $i$ is even in the braid form. 
It is clear that crossings of the same element of the partition have
the same sign. Therefore, we have $ w = \sum\limits_{i=1}^{2k+1}
\epsilon_{i}b_{i}$. Now after we choose the orientation as above, we
obtain $ \epsilon_{1} = \epsilon_{2} = 1$. Assume that we determine
the value of $\epsilon_{i}$ for $1\leq i \leq m-1$ and we want to
determine the value of $\epsilon_{m}$. We note that the value of
$\epsilon_{m}$  depends on the parity of $b_{m-2}, b_{m-1}$ and the
values of $\epsilon_{m-2},\epsilon_{m-1}$. Therefore, we can
consider the values of $b_{m-2}, b_{m-1}$ of being 1 or 2 in the
case that $b_{i}$ is odd or even respectively for $ i = m-2, m-1$.
Now we show one case as in figure \ref{figure5} and the other cases
will be treated similarly.
\end{proof}

\begin{figure}[h]
  \centering
        \includegraphics[width=6cm,height=9cm]{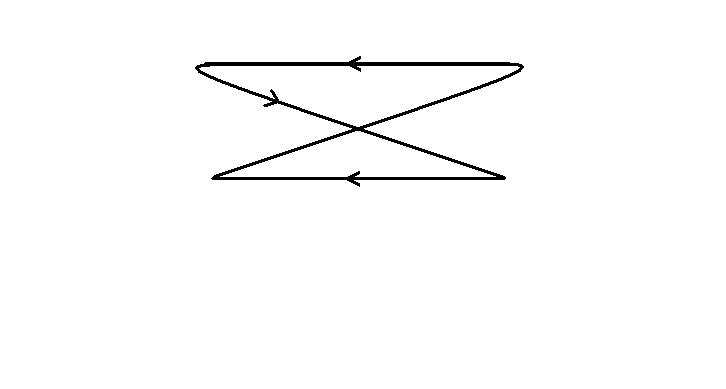}
            \includegraphics[width=8cm,height=9cm]{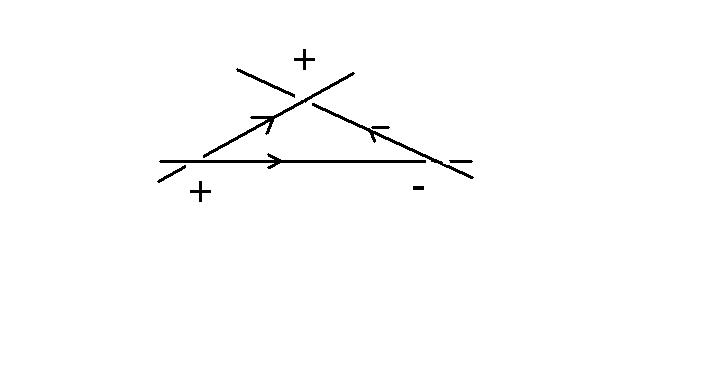}
        \vspace{-4cm}
    \caption{The case where $\sigma_{D} = (23)$ and one of the cases of $\epsilon_{i}$.}
    \label{figure5}
\end{figure}

From Theorems \ref{tutte},
\ref{jones via tutte} and Lemma \ref{abw}, we get a formula of the Jones polynomial of rational links.
\begin{thm}\label{main1}
The Jones polynomial of the rational link $L$ with the canonical
continued fraction $[b_1,b_2,\ldots,b_{2k+1}]$ is
\begin{equation} \label{main}
V_L(t)=(-1)^{w}t^{\frac{\sum\limits_{i=1}^{k+1}b_{2i-1}-(2+\sum\limits_{i=1}^{k}b_{2i})-3w}{4}}
\chi(G;-t,-t^{-1}).
\end{equation}
where $\chi(G;-t,-t^{-1})$ 
is the Tutte polynomial of the graph $G$ shown in figure
\ref{figure4} computed in Theorem \ref{tutte} and $w$ is the writhe
computed in Proposition \ref{writhe}.
\end{thm}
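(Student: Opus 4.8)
The plan is to assemble Theorem~\ref{main1} directly from the three ingredients already in place, since the substantive combinatorial and topological work has been done in Theorem~\ref{tutte}, Lemma~\ref{abw} and Proposition~\ref{writhe}. First I would record that the canonical diagram $D$ is alternating (noted immediately after the definition of the canonical diagram), so that Theorem~\ref{jones via tutte} applies and the associated Tait graph carries no signs. By the quoted lemma of \cite{K1}, the Tait graph $G$ of $D$, taken with the outside region white, is precisely the graph of figure~\ref{figure4}; this pins down which checkerboard colour we call ``black'' (the vertices of $G$) and which ``white'', and it is under this same convention that Lemma~\ref{abw} computes the region counts.

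Next I would invoke Theorem~\ref{jones via tutte} verbatim: with $a$ the number of white regions, $b$ the number of black regions and $w$ the writhe of $D$,
\[
V_L(t)=(-1)^{w}\,t^{\frac{a-b-3w}{4}}\,\chi(G;-t,-t^{-1}).
\]
Then I would substitute the closed forms from Lemma~\ref{abw}, namely $a=\sum_{i=1}^{k+1}b_{2i-1}$ and $b=|V_G|=2+\sum_{i=1}^{k}b_{2i}$, into the exponent of $t$; this yields exactly the exponent $\bigl(\sum_{i=1}^{k+1}b_{2i-1}-(2+\sum_{i=1}^{k}b_{2i})-3w\bigr)/4$ occurring in~(\ref{main}). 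The remaining factor $\chi(G;-t,-t^{-1})$ is the specialisation at $(x,y)=(-t,-t^{-1})$ of the polynomial computed in Theorem~\ref{tutte}, and $w$ is the integer produced by Proposition~\ref{writhe}. Nothing further needs to be evaluated, so formula~(\ref{main}) follows.

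The one point that genuinely requires care --- and the step I expect to be the main obstacle --- is the bookkeeping of conventions, so that the three inputs are mutually compatible: that the ``white'' of Lemma~\ref{abw} is the ``white'' of Theorem~\ref{jones via tutte} (equivalently, that the $x$-variable of the Tutte polynomial is attached to the colour whose cells Theorem~\ref{tutte} weights by the $x^{b_i}$ factors, not to the dual colour), and that the orientation selected in the proof of Proposition~\ref{writhe} is the one under which Theorem~\ref{jones via tutte}'s writhe is taken. Had one chosen the outside region black instead, one would obtain the dual graph and, by Lemma~\ref{tutte of dual}, an interchange of $x$ and $y$ together with $a\leftrightarrow b$; I would remark that the two choices give the same $V_L(t)$, so the stated formula is well defined. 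Modulo this consistency check, the proof is a one-line substitution.
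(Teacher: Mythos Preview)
Your proposal is correct and matches the paper's own approach exactly: the paper does not give a separate proof of Theorem~\ref{main1} but simply states that it follows from Theorems~\ref{tutte} and~\ref{jones via tutte} together with Lemma~\ref{abw}, which is precisely the one-line substitution you describe. Your additional remarks on the consistency of colouring conventions and on the dual-graph interchange are more than the paper provides, but they are helpful and do not deviate from the intended argument.
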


\begin{coro}
The determinant of the rational link $L$ with the canonical
continued fraction $[b_1,b_2,\ldots,b_{2k+1}]$ is
\begin{equation} \label{mainminor}
\det (L) = \sum_{A\subseteq E}\prod_{{b_i}\in A}b_{i} \prod_{{\alpha_i}\in f(A)}\alpha_{i}.
\end{equation}
\end{coro}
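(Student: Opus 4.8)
The plan is to derive the determinant formula as the specialization of Theorem~\ref{main1} at $t=-1$, using the classical fact that $\det(L) = |V_L(-1)|$ (up to sign, $\det(L) = V_L(-1)$ with an appropriate convention, and in any case the magnitude is what matters). First I would substitute $t = -1$ into equation~\eqref{main}. The prefactor $(-1)^w t^{(\cdots)/4}$ becomes a unit $\pm 1$ or $\pm i$ depending on the exponent modulo $4$, but since the determinant is defined as the absolute value of this evaluation, that prefactor contributes only an overall sign that we discard. So the whole problem reduces to computing $\chi(G; -t, -t^{-1})$ at $t=-1$, i.e. evaluating $\chi(G; x, y)$ at $x = 1$, $y = 1$.

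Next I would feed $x=1$, $y=1$ into the closed formula from Theorem~\ref{tutte}:
\[
\chi(G) = \sum_{A\subseteq E}\prod_{b_i\in A}\Bigl(\frac{x^{b_i}-1}{x-1}\Bigr)\prod_{\alpha_i\in f(A)}\Bigl(\frac{y^{\alpha_i}-1}{y-1}+x-1\Bigr).
\]
The key observation is that $\lim_{x\to 1}\frac{x^{b_i}-1}{x-1} = b_i$ and $\lim_{y\to 1}\frac{y^{\alpha_i}-1}{y-1} = \alpha_i$, while the term $x-1$ in the second factor vanishes. Hence each summand indexed by $A \subseteq E$ becomes exactly $\prod_{b_i\in A} b_i \cdot \prod_{\alpha_i\in f(A)} \alpha_i$, which is precisely the right-hand side of~\eqref{mainminor}. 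Since every $b_i \geq 1$ and every $\alpha_i$ (being a sum of some $b_j$'s with $j$ odd) is a positive integer, the entire sum is a sum of positive terms, so no cancellation or sign ambiguity arises and the absolute value is the sum itself.

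The main obstacle — really the only subtlety — is making the limiting argument at $x=1$, $y=1$ rigorous, since the formula in Theorem~\ref{tutte} has apparent singularities in its individual factors. The cleanest way to handle this is to note that $\chi(G;x,y) \in \mathbb{Z}[x,y]$ is a genuine polynomial (by Definition~\ref{def of tutte}), so $\chi(G;1,1)$ is well-defined, and the formula of Theorem~\ref{tutte}, being a polynomial identity valid for all $x,y$ away from the finitely many bad values, extends by continuity to $x=y=1$; one then reads off the value factor by factor using the cyclotomic identity $\frac{z^m - 1}{z-1} = 1 + z + \cdots + z^{m-1}$, which equals $m$ at $z=1$. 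I would also remark that this corollary recovers the known fact (see e.g.\ \cite{M}) that the determinant of a rational link equals the numerator $p$ of its slope, since $\sum_{A\subseteq E}\prod_{b_i\in A}b_i\prod_{\alpha_i\in f(A)}\alpha_i$ is a continuant-type expansion of $p$; verifying that identification against the Euclidean-algorithm recursion in the proof of the Lemma in Section~1 would be a worthwhile sanity check but is not needed for the proof itself.
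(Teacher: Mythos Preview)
Your approach is correct and is exactly the intended one: the paper states this corollary without proof, treating it as an immediate consequence of Theorem~\ref{main1} and Theorem~\ref{tutte} via the standard identity $\det(L)=|V_L(-1)|$ and the evaluation $\chi(G;1,1)$. Your write-up simply makes explicit the limit $\frac{z^m-1}{z-1}\big|_{z=1}=m$ and the fact that the prefactor contributes only a unit, which is precisely what the paper leaves to the reader.
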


\begin{coro}
The Jones polynomial of rational link $C(b_{1})$ in Conway's notation in \cite{C} is given by
\[
V_l(t)=(-1)^{(b_1+1)}t^{\frac{-b_1+1}{2}}+\sum\limits_{i=1}^{b_1-1}(-1)^{i}(t^{-1})^{\frac{3b_1-(2i-1)}{2}}.
\]
\end{coro}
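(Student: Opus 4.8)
The plan is to specialize Theorem~\ref{main1} to the case $k=0$. By definition the canonical continued fraction of $C(b_1)$ is $[b_1]$, so $n=1$, $k=0$, and $E=\{\phi\}$. First I would collect the combinatorial data. Lemma~\ref{abw} gives $a=\sum_{i=1}^{1}b_{2i-1}=b_1$ and $b=2+\sum_{i=1}^{0}b_{2i}=2$. For the writhe, the diagram has a single block of crossings, all coming from $\sigma_1^{b_1}$; as noted in the proof of Proposition~\ref{writhe} the crossings in a block share a sign and $\epsilon_1=1$, and the permutation $\sigma_D$ is $(23)^{b_1}$, hence $(1)$ or $(23)$, both of which lie in the first case. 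Reading the recursion for $k=0$ therefore yields $w=b_1$. Substituting $a-b-3w=b_1-2-3b_1=-2b_1-2$ into the formula of Theorem~\ref{main1} gives
\[
V_L(t)=(-1)^{b_1}\,t^{-\frac{b_1+1}{2}}\,\chi(G;-t,-t^{-1}).
\]

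Next I would evaluate $\chi(G;-t,-t^{-1})$ using Corollary~\ref{cor1}. Writing $\frac{y^{b_1}-1}{y-1}=\sum_{j=0}^{b_1-1}y^{j}$ and putting $x=-t$, $y=-t^{-1}$ gives
\[
\chi(G;-t,-t^{-1})=\sum_{j=0}^{b_1-1}(-1)^{j}t^{-j}-t-1,
\]
and the $j=0$ term cancels the $-1$, leaving $\chi(G;-t,-t^{-1})=\sum_{j=1}^{b_1-1}(-1)^{j}t^{-j}-t$. Multiplying by $(-1)^{b_1}t^{-(b_1+1)/2}$ and distributing, the $-t$ contributes the isolated term $(-1)^{b_1}t^{-(b_1+1)/2}(-t)=(-1)^{b_1+1}t^{\frac{-b_1+1}{2}}$, while the sum contributes $\sum_{j=1}^{b_1-1}(-1)^{b_1+j}t^{-\frac{b_1+1}{2}-j}$.

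Finally I would reconcile this with the stated expression by the change of summation index $i=b_1-j$. As $j$ runs through $1,\dots,b_1-1$ so does $i$; the sign becomes $(-1)^{b_1+j}=(-1)^{2b_1-i}=(-1)^{i}$; and the exponent becomes $-\frac{b_1+1}{2}-(b_1-i)=-\frac{3b_1-(2i-1)}{2}$, so the summand is precisely $(-1)^{i}(t^{-1})^{\frac{3b_1-(2i-1)}{2}}$, which together with the isolated term $(-1)^{b_1+1}t^{\frac{-b_1+1}{2}}$ is the claimed formula. The whole argument is mechanical; the only places demanding attention are confirming that $w=b_1$ under the orientation convention of Proposition~\ref{writhe} (so that the power of $t$ coming from Theorem~\ref{main1} is correct), and carrying out the index shift $i=b_1-j$ with the correct sign bookkeeping.
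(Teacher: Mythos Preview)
Your argument is correct and follows the same route as the paper: specialize Theorem~\ref{main1} with $w=b_1$, plug in the Tutte polynomial from Corollary~\ref{cor1}, and simplify. Your justification of $w=b_1$ via Proposition~\ref{writhe} and the explicit index shift $i=b_1-j$ are slightly more detailed than the paper's presentation, but the approach is identical.
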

\begin{proof}
For any rational link with one denominator, we can take an
orientation such that $w = b_1$. Corollary \ref{cor1} implies
\[\chi(G;-t,-t^{-1})=\frac{(-t^{-1})^{b_1}-1}{-t^{-1}-1}-t-1
\]
We substitute in equation \ref{main} to get
    \begin{align*}
    V_L(t)&=(-1)^{b_1}t^{\frac{b_1-2-3b_1}{4}}(\frac{(-t^{-1})^{b_1}-1}{-t^{-1}-1}-t-1)\\
    &=(-1)^{b_1}(t^{-1})^{\frac{(b_1+1)}{2}}(\sum\limits_{i=1}^{b_1-1}(-1)^{b_1-i}(t^{-1})^{b_1-i}-t)\\
&=(-1)^{(b_1+1)}t^{\frac{-b_1+1}{2}}+\sum\limits_{i=1}^{b_1-1}(-1)^{i}(t^{-1})^{\frac{3b_1-(2i-1)}{2}}.
\end{align*}
\end{proof}

\begin{coro}
The Jones polynomial of the rational link $C(b_{1},b_{2})$ in Conway's notation in \cite{C} is
\[
V_l(t)=(-1)^{w}t^{\frac{b_{1} - b_{2} -3w}{4}}\left((\frac{(-t)^{b_2-1}-1}{-t-1})
(\frac{(-t^{-1})^{b_1}-1}{-t^{-1}-1}-t-1)(-t)+(\frac{(-t^{-1})^{b_1+1}-1}{-t^{-1}-1}-t-1)\right).
\]
\end{coro}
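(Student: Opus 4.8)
The plan is to specialize Theorem~\ref{main1} to the case $k=1$, feeding in the Tutte polynomial already recorded in Corollary~\ref{cor2} together with the region counts from Lemma~\ref{abw}.

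First I would recall, exactly as in the proof of Corollary~\ref{cor2}, that the canonical continued fraction of $C(b_1,b_2)$ is $[b_1,b_2-1,1]$. In the notation of Section~3 this is the sequence with $k=1$ and denominators $b_1'=b_1$, $b_2'=b_2-1$, $b_3'=1$, so that the canonical Tait's graph $G$ is the graph of figure~\ref{figure4} attached to this sequence. Next I would apply Lemma~\ref{abw} to these denominators: the number of white regions is
\[
a=\sum_{i=1}^{k+1}b_{2i-1}'=b_1'+b_3'=b_1+1,
\]
and the number of black regions is
\[
b=2+\sum_{i=1}^{k}b_{2i}'=2+(b_2-1)=b_2+1 .
\]
Hence $a-b=b_1-b_2$, which is the numerator (together with the $-3w$ term) of the exponent of $t$ in the statement, with $w$ the writhe supplied by Proposition~\ref{writhe} applied to $[b_1,b_2-1,1]$.

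Then I would take the Tutte polynomial from Corollary~\ref{cor2},
\[
\chi(G;x,y)=x\Bigl(\frac{x^{b_2-1}-1}{x-1}\Bigr)\Bigl(\frac{y^{b_1}-1}{y-1}+x-1\Bigr)+\frac{y^{b_1+1}-1}{y-1}+x-1,
\]
and substitute $x=-t$, $y=-t^{-1}$ as prescribed by Theorem~\ref{jones via tutte}; this is precisely the parenthesised expression appearing in the statement for $\chi(G;-t,-t^{-1})$. Plugging $a=b_1+1$, $b=b_2+1$, the writhe $w$, and this value of $\chi(G;-t,-t^{-1})$ into equation~\ref{main} of Theorem~\ref{main1} then yields the claimed formula directly.

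The only subtle point — and the main potential obstacle — is the index bookkeeping: Lemma~\ref{abw} must be applied to the \emph{canonical} denominators $[b_1,b_2-1,1]$ rather than to the Conway denominators $b_1,b_2$, and one should check that the telescoping sum $k+1+\sum_{i}(b_{2i-1}'-1)$ in that lemma collapses to $b_1+1$ here (the trailing denominator $b_3'=1$ contributing $0$). Once the identity $a-b=b_1-b_2$ is confirmed, everything else is a routine substitution, and no explicit evaluation of $w$ is needed since the statement carries it symbolically.
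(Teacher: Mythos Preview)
Your argument is correct and follows essentially the same route as the paper: substitute the Tutte polynomial from Corollary~\ref{cor2} and the region counts for $[b_1,b_2-1,1]$ into equation~\eqref{main}. The only difference is that the paper additionally records the explicit case-by-case values of $w$ (depending on the parities of $b_1,b_2$) before substituting, whereas you leave $w$ symbolic and defer to Proposition~\ref{writhe}; since the corollary as stated carries $w$ symbolically, your choice is entirely adequate.
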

\begin{proof}
For the rational link with two denominators, we have
\[
w=\begin{cases}
{b_1+b_2}, &\text{if $b_1,b_2\equiv 1(\bmod2)$  or  $b_1\equiv 0(\bmod2), b_2\equiv 1(\bmod2)$},
\\
{-(b_1+b_2)}, &\text{if $b_1\equiv 1(\bmod2), b_2\equiv 0(\bmod2)$},
\\
{-b_1+b_2}, &\text{if $b_1,b_2\equiv 0(\bmod2)$}.
\end{cases}
\]
Corollary \ref{cor2} gives
\[\chi(G;-t,-t^{-1})=-t(\frac{(-t)^{{b_2}-1}-1}{-t-1})(\frac{(-t^{-1})^{b_1}-1}{-t^{-1}-1}-t-1)+\frac{(-t^{-1})^{b_1+1}-1}{-t^{-1}-1}-t-1.
\]
Substitute in equation \ref{main} we get
\[
V_l(t)=(-1)^{w}t^{\frac{b_{1} - b_{2} - 3w}{4}}\left((\frac{(-t)^{b_2-1}-1}{-t-1})
(\frac{(-t^{-1})^{b_1}-1}{-t^{-1}-1}-t-1)(-t)+(\frac{(-t^{-1})^{b_1+1}-1}{-t^{-1}-1}-t-1)\right).
\]
\end{proof}


\end{document}